\newcommand{\Q}{\mathbb{Q}}
\newcommand{\Z}{\mathbb{Z}}
\newcommand{\F}{\mathbb{F}}
\newcommand{\R}{\mathbb{R}}
\renewcommand{\P}{\mathbb{P}}
\newtheorem{thm}{Theorem}
\newtheorem*{ack}{Acknowledgements}
\newtheorem*{rem}{Remark}
\newcommand{\Aut}{{\rm Aut}}
\newcommand{\im}{{\rm im}~}
\DeclareFontFamily{U}{wncy}{}
\DeclareFontShape{U}{wncy}{m}{n}{<->wncyr10}{}
\DeclareSymbolFont{mcy}{U}{wncy}{m}{n}
\DeclareMathSymbol{\Sha}{\mathord}{mcy}{"58}
\definecolor{acolor}{rgb}{0.6,0.4,0.8}
\definecolor{jcolor}{rgb}{0.2,0.8,0.2}
\begin{document}

\title[Sums of two sixth powers]{Integers that are sums of two rational sixth powers}
\author{Alexis Newton}
\author{Jeremy Rouse}
\subjclass[2010]{Primary 11G05; Secondary 14H45, 11Y50}
\begin{abstract}
  We prove that $164634913$ is the smallest positive integer
  that is a sum of two rational sixth powers but not a sum of two integer
  sixth powers. If $C_{k}$ is the curve $x^{6} + y^{6} = k$, we use the existence
  of morphisms from $C_{k}$ to elliptic curves, together with
  the Mordell-Weil sieve, to rule out the existence of rational points on $C_{k}$ for various $k$.
\end{abstract}

\maketitle

\section{Introduction and Statement of Results}
\label{intro}

Fermat's classification of which integers are the sum of two integer squares
allows one to prove that if $k$ is a positive integer and there are $a, b \in \Q$ with $a^{2} + b^{2} = k$, then there are $c, d \in \Z$ with $c^{2} + d^{2} = k$.
(For more detail, see Proposition 5.4.9 of \cite{Cohen}.)

However when considering higher powers, the analogous result is no longer true.
In particular, $6 = \left(\frac{17}{21}\right)^{3} + \left(\frac{37}{21}\right)^{3}$ despite the fact that there are no integers $x$ and $y$ so that $x^{3} + y^{3} = 6$. In \cite{BremnerMorton}, Andrew Bremner and Patrick Morton prove that $5906 = \left(\frac{25}{17}\right)^{4} + \left(\frac{149}{17}\right)^{4}$ is the smallest positive integer which is a sum of two rational fourth powers, but
not a sum of two integer fourth powers. Their proof involves a number of
explicit calculations involving class numbers and units in rings of integers
of number fields.

It is natural to ask what can be said about values of $n > 4$. In particular,
is there always an integer $k$ that is a sum of two rational $n$th powers
but not a sum of two integer $n$th powers? In John Byrum's unpublished
undergraduate thesis (conducted under the direction of the second author),
he proves that if there is a prime $p \equiv 1 \pmod{2n}$ with $p \leq 2n^{2} - n + 1$, then there is a positive integer $k$ that is a sum of two rational $n$th
powers but not a sum of two integer $n$th powers. It is not known that
one can find such a prime $p$. Even assuming the generalized Riemann hypothesis (GRH), the strongest known result
at this time is that the smallest prime $p \equiv 1 \pmod{2n}$ is less than or equal to
$(\phi(2n) \log(2n))^{2}$ (by Corollary 1.2 of \cite{LamzouriLiSound}),
which is not sufficiently small unless $n = 3$. It is conjectured that
the smallest prime $p \equiv a \pmod{q}$ satisfies $p \ll q^{1+\epsilon}$, which would be sufficient.

The goal of the present paper is to handle the case $n = 6$ and prove an
analogous result to that of Bremner and Morton. Our main result
is the following.
\begin{thm}
\label{main}
The smallest positive integer which is a sum of two rational sixth powers but not a sum of two integer sixth powers is
\[
164634913 = \left(\frac{44}{5}\right)^{6} + \left(\frac{117}{5}\right)^{6}.
\]
\end{thm}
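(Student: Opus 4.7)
The proof splits into two parts: Part I verifies the claimed property of $164634913$, and Part II shows no smaller positive integer has this property. Part I is direct: the identity $44^6 + 117^6 = 5^6 \cdot 164634913$ yields the rational representation, and for integer non-representability, any solution $a^6 + b^6 = 164634913$ with $a, b \in \Z_{\geq 0}$ satisfies $a, b \leq 23$ (since $24^6 > 164634913 > 23^6$), so a finite check suffices.

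For Part II, let $S \subset [1, 164634912]$ denote the (easily enumerated) set of integers of the form $a^6 + b^6$, $a, b \in \Z_{\geq 0}$. For each $k \in [1, 164634912] \setminus S$, the goal is to prove $C_k(\Q) = \emptyset$, where $C_k : X^6 + Y^6 = kZ^6$ is the smooth plane sextic of genus $10$. Stage one is a large local sieve: test whether $C_k(\Q_p) \neq \emptyset$ at a fixed collection of small primes, especially $p = 2, 3$ and those with $6 \mid p - 1$ (such as $p = 7, 13, 19, 31, 37, \ldots$). Because $\F_p^{\times}/(\F_p^{\times})^6$ is nontrivial for such $p$, the density of $k$ that pass every local test is small, and the vast majority of $k$ are discarded at this stage.

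Stage two uses elliptic quotients of $C_k$. There are two natural Klein-four quotients to elliptic curves: $E_k : u^3 + v^3 = k$ via $(x, y) \mapsto (x^2, y^2)$, and $D_k : s^3 - 3st^2 = k$ via $(x, y) \mapsto (x^2 + y^2, xy)$; both are smooth plane cubics with a rational point at infinity. A rational point of $C_k$ maps to a point of $E_k(\Q)$ whose coordinates are both rational squares, and to a point of $D_k(\Q)$. For each surviving $k$, one would compute the Mordell-Weil groups $E_k(\Q)$ and $D_k(\Q)$ (rank, torsion, and generators) and then apply the Mordell-Weil sieve: for a modulus $N$ and auxiliary primes $\ell$ of good reduction, the image of $C_k(\Q)$ in $E_k(\Q)/N E_k(\Q)$ must lie in the intersection, over $\ell$, of the preimages under reduction of the images of $C_k(\F_\ell)$ in $E_k(\F_\ell)$. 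Showing this intersection is empty rules out $C_k(\Q)$.

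The main obstacle is computational scale. With $\sim 10^8$ candidate values of $k$, the pipeline (local sieve, then Mordell-Weil computation, then sieve) must be carefully implemented. While the local stage handles the overwhelming majority of $k$, the residual cases can involve elliptic curves of positive rank for which rank and generator computation requires nontrivial $2$-descent, and the Mordell-Weil sieve may fail to close at a small set of auxiliary primes for occasional stubborn $k$; these cases will require enlarging the set of primes, combining information from both quotients $E_k$ and $D_k$, or reverting to Chabauty-Coleman on a higher-genus quotient of $C_k$. Organizing this work so that only a manageable number of truly hard cases remain is the crux of the argument.
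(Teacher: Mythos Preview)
Your high-level strategy---local sieve, then elliptic quotients, then Mordell--Weil sieve---is exactly the paper's. The gaps are in the execution.

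First, you locate only two elliptic quotients of $C_k$, and up to $3$-isogeny these are only two of the six the paper uses. Your $u^3+v^3=k$ has Weierstrass model $y^2=x^3-432k^2$, which is $3$-isogenous to $E_{16k^2}:y^2=x^3+16k^2$; your $D_k:s^3-3st^2=k$ is isomorphic to $E_{27k^2}$, which is $3$-isogenous to $E_{-k^2}$. The paper finds, via the automorphism group of $C_k$, six pairwise non-isogenous quotients $E_k,\,E_{4k},\,E_{-k^2},\,E_{16k^2},\,E_{k^3},\,E_{-4k^4}$, and the freedom to switch among them is what makes the computation terminate. In particular, the single most effective step in the paper eliminates over $52{,}000$ odd values of $k$ in one stroke by proving $L(E_{k^3},1)\ne 0$ via an explicit weight-$3/2$ modular form (Waldspurger); this works precisely because $E_{k^3}$ is a \emph{quadratic} twist family, a feature absent from your two quotients. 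The later descent steps lean on $E_k$ and $E_{4k}$, whose $3$-isogeny descents are cheap. Restricted to only $E_{-k^2}$ and $E_{16k^2}$, you will meet many $k$ for which both curves have rank $\ge 2$ and visible $\Sha$, and your proposed fallback (Chabauty on a higher-genus quotient) is far more costly than choosing a better elliptic factor.

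Second, for a theorem of this type the proof \emph{is} the completed computation. Your text is a plan (``one would compute'', ``this will require'') rather than a report of a computation carried out. The paper accounts for every candidate $k<164634913$: a precise local-solvability criterion reduces to $111{,}625$ values, and then each subsequent step (Waldspurger, $3$-isogeny descent, full $3$-descent, $12$-descent, unconditional $2$-descent, Mordell--Weil sieve) is executed with explicit scripts, timings, and case-by-case data for the stubborn $k$'s. Until the pipeline is actually run and closes on every $k$, the outline---however sound---does not establish the theorem.
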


To prove the main result we must show that if an integer $k <
164634913$ is sixth-power free and is not a sum of two integer sixth
powers, then it is not a sum of two rational sixth powers either. We
proceed by studying when $C_{k} : x^{6} + y^{6} = kz^{6}$ has a
solution in $\Q_{p}$ for all primes $p$, which reduces the number of
necessary $k$ to consider to $111625$. To handle these, we decompose
the Jacobian of $C_{k}$ (up to isogeny) as a product of ten elliptic
curves elliptic curves, each with $j$-invariant zero. We use a
combination of techniques to show that for each of the remaining
$111625$ values of $k$, either one of these elliptic curves has rank
zero, or we determine a finite-index subgroup of the Mordell-Weil
group and use the Mordell-Weil sieve. Code and output files verifying
our computations are available on GitHub \href{https://github.com/newtan18/Sums-of-Two-Sixth-Powers/}{here}.

We note that there are infinitely many integers that are sums of two rational
sixth powers but not sums of two integer sixth powers.
\begin{thm}
\label{family} Let $t$ be an integer and $f_{1} = (2863 + 10764t)/13$ and $f_{2} = (1207 + 26455t)/13$. Then $f_{1}^{6} + f_{2}^{6}$ is an integer that is a sum
of two rational sixth powers, but not a sum of two integer sixth powers.
\end{thm}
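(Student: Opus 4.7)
The plan is to prove the three assertions separately: that $N(t) := f_1(t)^6 + f_2(t)^6$ is an integer, that it is a sum of two rational sixth powers (immediate from the definition), and most substantively that it admits no representation as a sum of two integer sixth powers. Set $A(t) = 2863 + 10764 t$ and $B(t) = 1207 + 26455 t$, so that $f_1 = A/13$, $f_2 = B/13$, and $N(t) = (A^6 + B^6)/13^6$.

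For integrality, I would use the factorization $A^6 + B^6 = (A^2 + B^2)(A^4 - A^2 B^2 + B^4)$ and verify by direct expansion that
\[
A(t)^2 + B(t)^2 \;=\; 9653618 + 125497034\, t + 815730721\, t^2 \;=\; 13^6 \bigl( (1 + 13 t)^2 + 1 \bigr),
\]
recognizing $9653618 = 2 \cdot 13^6$, $125497034 = 2 \cdot 13^7$, and $815730721 = 13^8$. (This is where the specific coefficients $10764$ and $26455$ earn their keep.) Consequently $N(t) = K(t) \cdot L(t)$ with $K(t) = (1 + 13 t)^2 + 1$ and $L(t) = A(t)^4 - A(t)^2 B(t)^2 + B(t)^4$ both in $\Z[t]$, so $N(t) \in \Z$ for every integer $t$.

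The main step is the third, and I would handle it with a congruence obstruction at the prime $13$. Since $13 \mid 10764$ and $13 \mid 26455$, we have $A(t) \equiv 3$ and $B(t) \equiv -2 \pmod{13}$ for every $t \in \Z$. Then $K(t) \equiv 2$ and $L(t) \equiv 3^4 - 3^2 \cdot 2^2 + 2^4 = 61 \equiv 9 \pmod{13}$, which gives $N(t) \equiv 2 \cdot 9 \equiv 5 \pmod{13}$ uniformly in $t$. But $(\Z/13\Z)^{\times}$ is cyclic of order $12$, so the sixth powers modulo $13$ are $\{0, 1, -1\}$, and every sum $c^6 + d^6$ lies in $\{0, \pm 1, \pm 2\} \pmod{13}$, which does not contain $5$. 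Thus no integers $c, d$ can satisfy $c^6 + d^6 = N(t)$. The only really delicate move is spotting the obstruction prime; with the denominator $13$ already staring us in the face, trying $p = 13$ is the natural first guess, after which the computation falls out.
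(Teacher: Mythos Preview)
Your proof is correct and follows the same mod-$13$ obstruction as the paper: show $f_1^6+f_2^6\equiv 5\pmod{13}$ and note that $5$ is not among the sums of two sixth powers in $\Z/13\Z$. The paper simply asserts that upon expanding $f_1^6+f_2^6$ as a polynomial in $t$ one finds integer coefficients, those of $t,\dots,t^6$ divisible by $13$ and constant term $\equiv 5\pmod{13}$; your route via the identity $A^2+B^2=13^6\bigl((1+13t)^2+1\bigr)$ is a tidier way to verify integrality and the congruence simultaneously.
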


The polynomial $f_{1}^{6} + f_{2}^{6}$ is constructed so that the coefficients of $t, t^2, \ldots, t^6$ are all multiples of $13$, while the constant coefficient is equivalent to $5 \pmod{13}.$ Since it is impossible to have an integer equivalent to $5 \pmod{13}$ be a sum of two integer sixth powers, we have our result.

\begin{rem}
It seems likely that no positive integer can be written as a sum of
two rational sixth powers in more than one way. In \cite{Ekl}, Randy
Ekl searched for integer solutions to $a^{6} + b^{6} = c^{6} + d^{6}$
with $a \ne c$ and $a \ne d$ and found none for which $a^{6} + b^{6} <
7.25 \cdot 10^{24}$. The surface $X : a^{6} + b^{6} = c^{6} + d^{6}$
is a surface of general type, and the Bombieri-Lang conjecture predicts
that there are only finitely many rational points on $X$ that do not
lie on a genus $0$ or $1$ curve.
\end{rem}

\begin{ack}
This work represents joint work done when the first author was a
master's student at Wake Forest University. Computations were done in
Magma \cite{Magma} version 2.26-9 on a desktop with an Intel i9-11900K
CPU and 128 GB of RAM. The authors thank the referees for the
detailed suggestions including the decomposition of the Jacobian of $x^{6}+y^{6}=kz^{6}$ and computational suggestions that led to an unconditional main result.
\end{ack}

\section{Background}
\label{back}

We let $\Q_{p}$ denote the field of $p$-adic numbers. A necessary
condition for a curve $C/\Q$ to have a rational point is for $C(\R)
\ne \emptyset$ and $C(\Q_{p}) \ne \emptyset$ for all primes $p$. If
$C$ satisfies this condition, we say that $C$ is locally solvable.

For our purposes, an elliptic curve is a smooth cubic curve of the form
\[
E : y^{2} + a_{1} xy + a_{3} y = x^{3} + a_{2} x^{2} + a_{4} x + a_{6}.
\]
There is a natural abelian group structure on $E(\Q)$, the set of rational
points on $E$.

\begin{thm}[\cite{Silverman}, Theorem VIII.4.1]
  The group $E(\Q)$ is finitely generated. That is, there is a finite group
  $E(\Q)_{\rm tors}$ so that $E(\Q) \cong E(\Q)_{\rm tors} \times \Z^{r}$ for some
  non-negative integer $r$.
\end{thm}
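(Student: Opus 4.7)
The plan is to prove this by the classical Mordell--Weil descent, which has two nearly independent ingredients: a finiteness statement for a quotient group and a height function used to run an infinite descent. My first step would be to establish the \emph{weak Mordell--Weil theorem}: for some integer $m \geq 2$, the quotient $E(\Q)/mE(\Q)$ is a finite group. The standard approach is to work after a finite extension $K/\Q$ over which $E[m] \subseteq E(K)$, use the Kummer pairing attached to multiplication by $m$ to embed $E(K)/mE(K)$ into $\mathrm{Hom}(\Gal(\overline{K}/K), E[m])$, and then show the image consists of homomorphisms cut out by extensions unramified outside a finite set $S$ of primes (bad reduction plus primes above $m$). Finiteness of $E(K)/mE(K)$ then follows from the finiteness of the maximal abelian extension of $K$ of exponent $m$ unramified outside $S$, which in turn rests on the finiteness of the class group and the Dirichlet unit theorem. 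A restriction argument recovers finiteness of $E(\Q)/mE(\Q)$.

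Next I would introduce a height function $h \colon E(\Q) \to \R_{\geq 0}$, for instance by pulling back the naive logarithmic height on $\P^{1}$ via the $x$-coordinate. The three properties I would need are: (i) for every $C>0$ the set $\{P \in E(\Q) : h(P) \leq C\}$ is finite, proved from the finiteness of rationals of bounded height; (ii) for each fixed $Q \in E(\Q)$, a bound of the form $h(P+Q) \leq 2h(P) + C_{Q}$, proved by examining the explicit rational functions giving the group law; and (iii) a lower bound $h(2P) \geq 4h(P) - C$, proved via the duplication formulas. These are routine but tedious, and the slickest route is through the canonical height $\hat{h}$, where the quadratic behavior is automatic.

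The descent step then glues these together. Fixing coset representatives $Q_{1}, \ldots, Q_{n}$ of $E(\Q)/2E(\Q)$, one writes any $P \in E(\Q)$ as $P = 2P_{1} + Q_{i_{1}}$, then $P_{1} = 2P_{2} + Q_{i_{2}}$, and so on. The two height inequalities force $h(P_{j})$ to shrink at a geometric rate until it falls below a fixed threshold depending only on the $Q_{i}$; at that point $P_{j}$ lies in the finite set from property (i). Therefore $E(\Q)$ is generated by the $Q_{i}$ together with finitely many points of bounded height, proving it is finitely generated. The structure theorem for finitely generated abelian groups then gives the decomposition $E(\Q) \cong E(\Q)_{\rm tors} \times \Z^{r}$.

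I expect the hard part to be the weak Mordell--Weil theorem, since property (i) is elementary and properties (ii) and (iii) reduce to explicit formulas; the arithmetic input (finiteness of class groups, Dirichlet's theorem, and a cohomological control of ramification) is what actually makes the whole argument possible and is where all the number-theoretic content sits.
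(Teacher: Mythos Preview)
Your outline is a correct sketch of the classical Mordell--Weil argument and matches the proof given in the cited reference (Silverman, Chapter~VIII). Note, however, that the paper itself does not prove this theorem: it is simply quoted as background with a citation to \cite{Silverman}, so there is no in-paper proof to compare against.
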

The non-negative integer $r$ is called the rank of $E(\Q)$. The Birch
and Swinnerton-Dyer conjecture predicts that if $L(E,s)$ is the $L$-function
of $E$, the ${\rm ord}_{s=1} L(E,s) = r$. This is proven
in the case that $r = 0$ or $1$ by Gross-Zagier \cite{GrossZagier} and
Kolyvagin \cite{Kolyvagin}. 

For $k \ne 0$, the curve $C_{k} : x^{6} + y^{6} = kz^{6}$ is a curve
of genus $10$. For $k = 1$, the decomposition of the Jacobian is
worked out in \cite{Aoki} and it follows that each factor of
$J(C_{1})$ is an elliptic curve with $j$-invariant zero. We will show
in Section~\ref{maps} that there are non-constant morphisms from $C_{k}$ to six different elliptic curves of the form $E_{a} : y^{2} = x^{3} + a$. The torsion
subgroup of an elliptic curve of the form $E_{a}$ has been known for some time.

\begin{thm}[\cite{Fueter}]
\label{torsion}
If $E_{a} : y^{2} = x^{3} + a$, then
\[
E_{a}(\Q)_{\rm tors} \cong \begin{cases}
  \Z/6\Z & \text{ if } a \text{ is a sixth power, } \\
  \Z/3\Z & \text{ if } a \text{ is a square but not a sixth power }\\
  \Z/3\Z & \text{ if } a \text{ is } -432 \text{ times a sixth power, }\\
  \Z/2\Z & \text{ if } a \text{ is a cube but not a sixth power, }\\
  \Z/1\Z & \text{ otherwise. }
\end{cases}
\]
\end{thm}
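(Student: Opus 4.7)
The plan is to analyze rational $2$-torsion and $3$-torsion of $E_{a}$ separately via division polynomials, and combine these with an a priori bound that forces the full torsion subgroup of $E_{a}(\Q)$ into a subgroup of $\Z/6\Z$. This last bound can be obtained by reduction modulo primes: for any prime $p \equiv 2 \pmod{3}$ with $p \nmid 6a$, the curve $E_{a}$ has good supersingular reduction with $\#E_{a}(\F_{p}) = p+1$. Since prime-to-$p$ torsion injects into $E_{a}(\F_{p})$, choosing two such primes shows $\#E_{a}(\Q)_{\rm tors}$ divides $6$. (Alternatively, one may cite the known classification of torsion subgroups of elliptic curves with CM by $\Z[\zeta_{3}]$ over $\Q$.)

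For $2$-torsion, a rational point with $y=0$ requires a rational root of $x^{3}+a$, i.e.\ $a \in (\Q^{*})^{3}$. For $3$-torsion, the $3$-division polynomial of $E_{a}$ factors as
\[
\psi_{3}(x) = 3x^{4} + 12 a x = 3x(x^{3}+4a),
\]
so a rational $3$-torsion point has $x=0$ or $x^{3}=-4a$. The root $x=0$ contributes a rational point iff $y^{2}=a$ is solvable, i.e.\ $a \in (\Q^{*})^{2}$. A root with $x \neq 0$ requires $-4a = u^{3}$ for some $u \in \Q^{*}$, and then $y^{2} = x^{3}+a = -3a$ must also be a rational square. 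Writing $-3a = v^{2}$ and eliminating $a$ gives $3u^{3}=4v^{2}$, hence $3u = (2v/u)^{2}$ is a rational square; setting $3u=w^{2}$ yields $a=-u^{3}/4 = -w^{6}/108$, and rescaling $w = 6c$ gives $a = -432 c^{6}$. Thus this branch contributes exactly when $a \in -432\,(\Q^{*})^{6}$, in which case $(12c^{2},\,36c^{3})$ is the explicit $3$-torsion point.

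Combining these, full torsion $\Z/6\Z$ requires both $2$- and $3$-torsion. The coset $-432\,(\Q^{*})^{6}$ is disjoint from $(\Q^{*})^{3}$ since $-432 = -2^{4}\cdot 3^{3}$ is nontrivial modulo $(\Q^{*})^{3}$, so both can coexist only when $a$ is simultaneously a square and a cube, i.e.\ $a \in (\Q^{*})^{6}$. Checking disjointness of the five cases is routine: $(\Q^{*})^{6}$ is by definition excluded from the next three; $-432\,(\Q^{*})^{6} \cap (\Q^{*})^{2} = \emptyset$ because $-432 c^{6} < 0$; and $-432\,(\Q^{*})^{6} \cap (\Q^{*})^{3} = \emptyset$ as above. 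Together with the bound $\#E_{a}(\Q)_{\rm tors} \mid 6$, the five cases partition $\Q^{*}$ and produce precisely the torsion groups listed.

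The main obstacle is the second $3$-torsion branch: one must identify the exact coset of $(\Q^{*})^{6}$ on which the simultaneous conditions ``$-4a$ is a rational cube'' and ``$-3a$ is a rational square'' hold, pinning down the constant $-432$. The manipulation is elementary, but easy to mishandle if one works modulo squares and cubes separately rather than keeping both conditions coupled; once this is settled, the rest of the proof is straightforward case analysis.
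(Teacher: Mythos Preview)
The paper does not prove this theorem; it is stated with a citation to Fueter and used as a black box. Your argument is a correct, self-contained proof of the result: the reduction modulo primes $p\equiv 2\pmod 3$ of good reduction (where $\#E_a(\F_p)=p+1$) gives the bound $\#E_a(\Q)_{\rm tors}\mid 6$, and your division-polynomial analysis of $2$- and $3$-torsion is accurate, including the identification of the $-432\,(\Q^*)^6$ coset from the coupled conditions $-4a\in(\Q^*)^3$ and $-3a\in(\Q^*)^2$.

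One small remark: your phrase ``choosing two such primes'' is a bit compressed. To rule out every prime $\ell$ in the torsion order beyond $2$ and $3$, and to rule out $4$- and $9$-torsion, one really uses Dirichlet to pick, for each such $\ell$, a prime $p\equiv 2\pmod 3$ with $p\nmid 6a$ and $p\not\equiv -1\pmod{\ell}$ (respectively $p\not\equiv -1\pmod 4$ or $\pmod 9$). This is routine, but ``two primes'' undersells it slightly. Also, for odd $p$ of good reduction the full torsion (not just the prime-to-$p$ part) injects into $E_a(\F_p)$, so you can state the injection more strongly.
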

There is a torsion point on $y^{2} = x^{3} + a$ for which $x$ and $y$ are
both nonzero only when $a = -432k^{6}$ (namely $(12k^{2} : \pm 36k^{3} : 1)$) or $a = k^{6}$ (namely $(2k^{2} : \pm 3k^{3} : 1)$).

The Mordell-Weil sieve is a technique for proving that a curve $C$ has
no rational points. For a thorough treatment of this subject, see the
paper of Nils Bruin and Michael Stoll \cite{BruinStoll}.

Let $J$ be the Jacobian of $C$ and assume that we have in hand
a $\Q$-rational divisor $D$ of degree $1$ on $C$. Let $\iota \colon C \to J$
be the map $\iota(P) = P-D$. Fix a positive integer $N$ and a finite set $S$
of primes. We then have the following commutative diagram.
\[
\xymatrix{
C(\Q) \ar[rr]^{\iota} \ar[d] & & J(\Q)/NJ(\Q) \ar[d]^{\alpha}\\
\prod_{p \in S} C(\F_{p}) \ar[rr]^{\beta} & & \prod_{p \in S} J(\F_{p})/NJ(\F_{p})}
\]
If $C(\Q)$ is non-empty, then there will be an element in
$\prod_{p \in S} J(\F_{p})/NJ(\F_{p})$ that is in the image of both $\alpha$
and $\beta$. Therefore, if we can find an $N$ and a finite set $S$ for which
the image of $\alpha$ and the image of $\beta$ are disjoint, then $C(\Q)$
is empty.

The curve $C_{k}$ has maps to six different elliptic curves: $E_{k}$,
$E_{4k}$, $E_{-k^{2}}$, $E_{16k^{2}}$, $E_{k^{3}}$ and $E_{-4k^{4}}$. As a consequence,
we will replace $J$ with one of these six curves in our applications. Computing
the Mordell-Weil group (or a finite index subgroup thereof) for one of these
six elliptic curves allows us to apply the Mordell-Weil sieve to $C_{k}$.

\section{Finding an integer that is a sum of two rational sixth powers}

We will describe briefly how the representation of $164634913 = (44/5)^{6} + (117/5)^{6}$ was generated by the authors. We seek integers $x$, $y$ and $m$
for which $x^{6} + y^{6} \equiv 0 \pmod{m^{6}}$ with $\gcd(x,m) = \gcd(y,m) = 1$.
This equation implies that $xy^{-1}$ must have order $4$ or $12$ in $(\Z/m^{6} \Z)^{\times}$ which implies that all the prime factors of $m$ must be $\equiv 1 \pmod{4}$. The smallest such $m$ is $m = 5$.

We let $q = 1068$ be an element of order $4$ in $(\Z/5^{6} \Z)^{\times}$. Then $1^{6} + q^{6} \equiv 0 \pmod{5^{6}}$. We wish to find an integer $a$ so that
$\pm a \bmod 5^{6}$ and $\pm aq \bmod 5^{6}$ are both small. We consider the
lattice $L \subseteq \R^{2}$ consisting of all vectors $\left\{ \begin{bmatrix} x \\ y \end{bmatrix} : y \equiv qx \pmod{5^{6}} \right\}$. We find that
an $LLL$-reduced basis for this lattice consists of $\begin{bmatrix} 117 \\ 44 \end{bmatrix}$ and $\begin{bmatrix} 44 \\ -117 \end{bmatrix}$ from which
we obtain $164634913 = \left(\frac{44}{5}\right)^{6} + \left(\frac{117}{5}\right)^{6}$.

We wish to note that this representation was found at least twice previously.
First, it is given by J. M. Gandhi on page 1001 of \cite{Gandhi}. Second,
it was noted by John W. Layman on Oct. 20, 2005 in connection with OEIS sequence A111152 (the smallest integers that are a sum of two rational $n$th powers but not a sum of two integer $n$th powers).

For integers of the form $x^{n} + y^{n}$ with $n$ odd, there are no local restrictions, and setting $x = \frac{2^{n-1} - 1}{2}$ and $y = \frac{2^{n-1} + 1}{2}$ leads to a fairly small integer that is a sum of two rational $n$th powers.
For $n = 5$, this leads to $68101 = \left(\frac{15}{2}\right)^{5} + \left(\frac{17}{2}\right)^{5}$.

\section{Local solvability}

In this section we study the question of when $C_{k} : x^{6} + y^{6} = k$ is
locally solvable. 

\begin{thm}
Let $k$ be a positive integer which is sixth power free. Then $C_{k}$ is locally solvable if and only if $C_{k}$ has points over $\Q_{p}$ for all primes $p < 400$ and all odd prime factors $p \mid k$ have $p \equiv 1 \pmod{4}$.
\end{thm}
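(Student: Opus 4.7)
The ``$p < 400$'' clause is immediate from local solvability. For the congruence condition on odd prime divisors, I argue by contradiction: if $p \mid k$ is odd with $p \equiv 3 \pmod{4}$, take a projective solution $(x:y:z) \in C_k(\Q_p)$ scaled so that $\min(v_p(x), v_p(y), v_p(z)) = 0$. A short case analysis, using $v_p(kz^6) \in v_p(k) + 6\Z_{\geq 0}$ together with $1 \leq v_p(k) \leq 5$ (sixth-power-freeness), rules out every configuration except $v_p(x) = v_p(y) = 0$ and $x^6 + y^6 \equiv 0 \pmod{p}$; in particular the subcase $v_p(x), v_p(y) \geq 1$, $v_p(z) = 0$ is killed by $v_p(x^6+y^6) \geq 6 > 5 \geq v_p(k)$. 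We are reduced to $(x/y)^6 \equiv -1 \pmod p$. But every sixth power in $\F_p^\times$ is a square, and $-1$ is not a square modulo a prime $p \equiv 3 \pmod 4$, a contradiction.

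\textbf{Reverse direction.} Assume both conditions; I must exhibit $C_k(\Q_p) \neq \emptyset$ for every prime $p \geq 400$. When $p \nmid k$, the equation $x^6 + y^6 = kz^6$ reduces to a smooth plane curve of genus $10$ over $\F_p$, so the Hasse--Weil bound gives $\#C_k(\F_p) \geq p + 1 - 20\sqrt{p}$, which is positive for $p \geq 401$, and Hensel's lemma lifts a smooth $\F_p$-point to a $\Q_p$-point. When $p \mid k$, the prime is odd and the hypothesis forces $p \equiv 1 \pmod{4}$. On the chart $z = 1$, a $\Q_p$-point exists as soon as some $u \in \Q_p$ makes $k/(1+u^6) \in (\Q_p^\times)^6$. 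Since $p > 3$ and $p \equiv 1 \pmod 4$, Hensel applied to $T^6 + 1$ yields $u_* \in \Z_p^\times$ with $u_*^6 = -1$. Writing $\alpha = v_p(k)$, $k = p^\alpha k'$, and perturbing $u = u_* + p^\alpha s$ for $s \in \Z_p^\times$, the Taylor expansion $1 + u^6 = 6 u_*^5 p^\alpha s + O(p^{2\alpha})$ gives $v_p(1 + u^6) = \alpha$, so $k/(1+u^6) \in \Z_p^\times$ with residue $k'/(6 u_*^5 s) \in \F_p^\times$. As $s$ ranges over $\F_p^\times$ this residue covers all of $\F_p^\times$, so I can pick $s$ making it a sixth power mod $p$; and since $p > 3$ makes the sixth-power map an isomorphism on $1 + p\Z_p$, the element $k/(1+u^6)$ is then a genuine sixth power in $\Q_p^\times$, producing the required point $(w, uw, 1)$ with $w^6 = k/(1+u^6)$.

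\textbf{Main obstacle.} The real subtlety is the case $p \geq 400$ with $p \mid k$: $C_k$ has bad reduction at $p$, so the Weil bound is inapplicable, and one must build a $\Q_p$-point by hand through a two-step Hensel argument (first lift a mod-$p$ root of $T^6 + 1$, then perturb by a unit multiple of $p^\alpha$ to land in the correct sixth-power coset in $\F_p^\times/(\F_p^\times)^6$). The entire construction hinges on $-1 \in (\Z_p^\times)^6$ whenever $p \equiv 1 \pmod 4$ and $p > 3$, which is precisely why the congruence condition on the odd prime divisors of $k$ must appear in the statement. The cutoff $p < 400$ is chosen so that the Weil estimate $p + 1 - 20\sqrt{p}$ is positive for all relevant primes beyond it.
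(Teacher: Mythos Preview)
Your proof is correct. The forward direction and the good-reduction case ($p\nmid k$, $p\geq 401$) of the reverse direction match the paper's argument essentially verbatim (Hasse--Weil plus Hensel; for the forward direction the paper factors $x^{6}+y^{6}=(x^{2}+y^{2})(x^{4}-x^{2}y^{2}+y^{4})$ and checks each factor forces $x\equiv y\equiv 0\pmod p$, whereas you observe directly that $(x/y)^{6}\equiv -1$ would make $-1$ a square, which is a slightly cleaner way to say the same thing).

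The genuine difference is in the bad-reduction case $p\mid k$, $p\equiv 1\pmod 4$. The paper handles this in one line: by Fermat's two-squares theorem write $p=a^{2}+b^{2}$; since $a^{2}+b^{2}\mid a^{6}+b^{6}$, the point $(a:b:1)$ lies on $C_{k}/\F_{p}$, and it is smooth there (the singular locus of $x^{6}+y^{6}=0$ over $\F_{p}$ for $p>3$ is just $x=y=0$), so Hensel lifts it. Your route---lift a root of $T^{6}+1$ to $u_{*}\in\Z_{p}^{\times}$, then perturb by $p^{\alpha}s$ to land $k/(1+u^{6})$ in the right coset of $(\F_{p}^{\times})^{6}$, and finally use that raising to the sixth power is bijective on $1+p\Z_{p}$ for $p>3$---is correct but noticeably longer. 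What your argument buys is that it makes completely explicit why ``$-1$ is a sixth power in $\Z_{p}^{\times}$ for $p\equiv 1\pmod 4$, $p>3$'' is the crux, and it never leaves the $p$-adic setting; what the paper's argument buys is brevity and a direct link to the classical two-squares theorem, which also explains conceptually why the $1\pmod 4$ condition is exactly right.
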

\begin{proof}
  The curve $C_{k}$ is smooth over $\F_{p}$ for all primes $p$ other than $2$, $3$ and those dividing $k$. Since $C_{k}$ has genus $10$, Hasse's theorem gives
  that $|C_{k}(\F_{p})| > p+1 - 2 \cdot 10 \sqrt{p}$ provided $C_{k}/\F_{p}$ is smooth.
  The latter quantity is positive if $p > 400$. Also, Hensel's lemma implies
  that if $C_{k}(\F_{p})$ has a non-singular point, then it lifts to a non-singular point of $C_{k}(\Q_{p})$ and hence $C_{k}(\Q_{p}) \ne \emptyset$.

  If $p \mid k$ and $p \equiv 1 \pmod{4}$, then $p = a^{2} + b^{2}$ for some $a, b \in \Z$. Since $a^{2} + b^{2} \mid a^{6} + b^{6}$, we have that $(a : b : 1)$ is a smooth point on $C_{k}/\F_{p}$ and therefore $C_{k}(\Q_{p}) \ne \emptyset$.

  Suppose that $p \mid k$, $p \equiv 3 \pmod{4}$ and $(x_{0} : y_{0} :
  z_{0}) \in C_{k}(\Q_{p})$ with $x_{0}, y_{0}, z_{0} \in \Z_{p}$, not all of which are multiples of $p$. It follows that $p$ divides one of $x_{0}^{2} + y_{0}^{2}$
  or $x_{0}^{4} - x_{0}^{2} y_{0}^{2} + y_{0}^{4}$, and both of these imply that
  $x_{0} \equiv y_{0} \equiv 0 \pmod{p}$. It follows that
  $x_{0}^{6} + y_{0}^{6} = kz_{0}^{6}$ is a multiple of $p^{6}$. Since $k$ is sixth power free, it follows that $p \mid z_{0}$, which is a contradiction. Thus, $C_{k}(\Q_{p}) = \emptyset$.
\end{proof}

We note that the smallest positive integer $k$ that is not a sum of two
integer sixth powers for which $C_{k}$ is locally solvable is $k = 2017$.

To enumerate the $k < 164634913$ for which $C_{k}$ is locally solvable,
we note that if $C_{k}$ is locally solvable, then $k \equiv 1, 2 \pmod{7}$
and $k \equiv 1, 2 \pmod{8}$ and $k \equiv 1, 2 \pmod{9}$. Also, for each $p \equiv 1 \pmod{6}$ with $13 \leq p \leq 400$,
we enumerate and cache the integers which are sums of two sixth powers modulo $p$. Now, we test integers less than or equal to $164634913$ in each of the eight residue
classes modulo $504 = 7 \cdot 8 \cdot 9$. We remove values of $k$
that are not sixth-power free, that are divisible by a prime $\equiv 3 \pmod{4}$, that are sums of two integer sixth powers, and that reduce modulo some $p \equiv 1 \pmod{6}$ to an element of $\F_{p}$ that is not a sum of two sixth powers. The result is a list of $111625$ values of $k < 164634913$ which are not sums of two integer sixth powers and for which $C_{k}$ is locally solvable. The computation runs in 46.29 seconds and the code can be found in the script \href{https://github.com/newtan18/Sums-of-Two-Sixth-Powers/blob/main/step1-localtest.txt}{\tt step1-localtest.txt}.

\section{Maps from $C_{k}$ to elliptic curves}
\label{maps}

The curve $C_{k} : x^{6} + y^{6} = kz^{6}$ has at least $72$ automorphisms
defined over $\Q(\zeta_{6})$, generated by the maps $\mu_{1}(x : y : z) = (\zeta_{6} x : y : z)$, $\mu_{2}(x : y : z) = (x : \zeta_{6} y : z)$ and $\mu_{3}(x : y : z) = (y : x : z)$. Magma can work out the action of each of these maps on the $10$-dimensional space of holomorphic $1$-forms on $C_{k}$. We find
$8$ subgroups $H$ of $\Aut(C_{k})$ for which the quotient curve $C_{k}/H$ has
genus $1$ and the corresponding one-dimensional subspaces of holomorphic
$1$-forms are distinct. From these it is not difficult to compute the
corresponding map to an elliptic curve.

For example, one such subgroup is $\langle \mu_{1}^{5} \mu_{2}, \mu_{2}^{3} \mu_{3} \rangle$. The monomials $x^{3} y^{3}$, $xyz^{4}$ and $x^{6} - y^{6}$ are all
fixed by $\mu_{1}^{5} \mu_{2}$ and each are sent to their negative by
$\mu_{2}^{3} \mu_{3}$. If $\phi : C_{k} \to \P^{2}$ is given by
$\phi((x : y : z)) = (x^{3} y^{3} : xyz^{4} : x^{6} - y^{6})$, then we have
$\phi(P) = \phi(\alpha(P))$ for all points $P$ on $C_{k}$ and all $\alpha \in \langle \mu_{1}^{5} \mu_{2}, \mu_{2}^{3} \mu_{3} \rangle$. Letting $a = x^{3}y^{3}$, $b = xyz^{4}$ and $c = x^{6} - y^{6}$, the image of $\phi$ is the curve
\[
  a^{3} - \frac{k^{2}}{4} b^{3} + \frac{1}{4} ac^{2} = 0.
\]
This curve has genus $1$, and thus is the quotient curve $C_{k}/\langle \mu_{1}^{5} \mu_{2}, \mu_{2}^{3} \mu_{3} \rangle$. This curve has the point
$(0 : 0 : 1)$ on it, and a change of variables turns this into the
elliptic curve $E_{-4k^{4}}$. Composing these maps gives the map
$\phi : C_{k} \to E_{-4k^{4}}$ given by $\phi(x : y : z) = (k^{2} xyz^{4} : -k^{2} x^{6} + k^{2} y^{6} : x^{3} y^{3})$.

The table below lists all 10 independent maps from $C_{k}$ to elliptic curves.

\begin{center}
\begin{tabular}{ccc}
Subgroup of $\Aut(C_{k})$ & Codomain & Map\\
\hline
$\langle \mu_{1}^{2} \mu_{2}^{3} \rangle$ & $E_{k}$ & $(x,y) \mapsto (-y^{2}, x^{3})$\\
$\langle \mu_{1}^{3} \mu_{2}^{2} \rangle$ & $E_{k}$ & $(x,y) \mapsto (-x^{2}, y^{3})$\\
$\langle \mu_{1} \mu_{2}^{2} \rangle$ & $E_{4k}$ & $(x,y) \mapsto \left(\frac{x^{4}}{y^{2}},\frac{x^{6}+2y^{6}}{y^{3}}\right)$\\
$\langle \mu_{1}^{2} \mu_{2} \rangle$ & $E_{4k}$ & $(x,y) \mapsto \left(\frac{y^{4}}{x^{2}},\frac{2x^{6}+y^{6}}{x^{3}}\right)$\\
$\langle \mu_{1} \mu_{2}^{3} \rangle$ & $E_{-k^{2}}$ & $(x,y) \mapsto \left(\frac{k}{y^{2}}, \frac{kx^{3}}{y^{3}}\right)$\\
$\langle \mu_{1}^{3} \mu_{2} \rangle$ & $E_{-k^{2}}$ & $(x,y) \mapsto \left(\frac{k}{x^{2}}, \frac{ky^{3}}{x^{3}}\right)$\\
$\langle \mu_{1} \mu_{2}^{5}, \mu_{1}^{2} \mu_{2} \rangle$ & $E_{16k^{2}}$ & $(x,y) \mapsto \left(-4x^{2}y^{2}, -8x^{6} + 4k\right)$\\
$\langle \mu_{1} \mu_{2}^{4} \rangle$ & $E_{k^{3}}$ & $(x,y) \mapsto \left(\frac{kx^{2}}{y^{2}}, \frac{k^{2}}{y^{3}}\right)$\\
$\langle \mu_{1}^{4} \mu_{2} \rangle$ & $E_{k^{3}}$ & $(x,y) \mapsto \left(\frac{ky^{2}}{x^{2}}, \frac{k^{2}}{x^{3}}\right)$\\
$\langle \mu_{1}^{5} \mu_{2}, \mu_{2}^{3} \mu_{3} \rangle$ & $E_{-4k^{4}}$ & $(x,y) \mapsto \left(\frac{k^{2}}{x^{2} y^{2}}, \frac{-k^{2} x^{6} + k^{2}y^{6}}{x^{3}y^{3}}\right)$
\end{tabular}
\end{center}

We wish to note that for the maps from $C_{k} \to E_{4k}$, the quotient curve by
the subgroup indicated (either $\langle \mu_{1} \mu_{2}^{2} \rangle$ or $\langle \mu_{1}^{2} \mu_{2} \rangle$) is the genus two hyperelliptic curve given by $D_{k} : y^{2} = \frac{1}{k} x^{6} + \frac{1}{4k^{2}}$.
This equation may be rewritten as
\[
  \left(\frac{2ky}{x^{3}}\right)^{2} = \left(\frac{1}{x^{2}}\right)^{3} + 4k
\]
As a consequence, we have the map $\phi(x,y) = \left(\frac{1}{x^{2}},\frac{2ky}{x^{3}}\right)$ from $D_{k} \to E_{4k}$. (The authors did not find a subgroup of
$\Aut(C_{k})$ that fixed a one-dimensional space of differentials corresponding
to these maps.)

\begin{thm}
Suppose that $k$ is a sixth-power free integer and $P = (x,y)$ is a rational point on $C_{k}$ and the image of $P$ under one of the ten maps given above is a torsion point. Then $k = 1$ or $k = 2$.
\end{thm}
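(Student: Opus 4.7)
The plan is to go through each of the ten maps $\phi\colon C_k \to E_a$ in the preceding table (where $a \in \{k, 4k, -k^2, 16k^2, k^3, -4k^4\}$) and, assuming $\phi(P) = T$ is a torsion point of $E_a(\Q)$ with $P \in C_k(\Q)$, deduce that $k \in \{1, 2\}$. By Theorem~\ref{torsion} and the remark that follows it, the non-identity $\Q$-torsion points of $E_a : y^2 = x^3 + a$ take one of four explicit shapes: $(0,\pm\sqrt{a})$ with $a$ a rational square, $(-\sqrt[3]{a},0)$ with $a$ a rational cube, $(2m^2,\pm 3m^3)$ with $a = m^6$, or $(12m^2,\pm 36m^3)$ with $a = -432m^6$. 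Since $k$ is positive, the last case cannot occur for any $a$ in our list: the only negative candidates are $a = -k^2$ and $a = -4k^4$, which would force $k^2 = 432 m^6$ or $4k^4 = 432 m^6$, neither of which admits a nonzero rational solution (inspect $3$-adic valuations).

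For each map, I will substitute the explicit formula for $\phi$ into the coordinate constraints defining $T$ and combine with $x^6+y^6=k$; this reduces in every case to a finite system of polynomial equations over $\Q$, which I then solve using the sixth-power-freeness of $k$. Preimages of the identity $O$, computed via the projective extension of $\phi$, lie either at points of $C_k$ with $z = 0$ (none rational, since $x^6+y^6=0$ has only complex nonzero solutions) or at points with $x = 0$ or $y = 0$, which force $k$ to be a sixth power and hence $k = 1$. For torsion of the form $(0,\pm\sqrt{a})$ a similar analysis typically yields $k = 1$. For torsion of the form $(-\sqrt[3]{a},0)$ the equation usually produces an impossibility such as $x^2/y^2 = -1$; the exceptions are Maps 7 and 10, where the vanishing of the second coordinate of $\phi(P)$ simplifies to $x^6 = k/2$ and $y^6 = x^6$ respectively, each of which, combined with $x^6+y^6=k$ and the sixth-power-freeness of $k$, forces $k = 2$ with preimage $(\pm 1,\pm 1) \in C_2(\Q)$.

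Finally, torsion of the form $(2m^2,\pm 3m^3)$ requires $a$ to be a rational sixth power, which in the sixth-power-free regime restricts us to $a = k$ (with $k = 1$), $a = 4k$ (with $k = 16$), $a = 16k^2$ (with $k = 2n^3$ for $n$ a positive squarefree integer), or $a = k^3$ (with $k = n^2$ for $n$ cube-free). In each such candidate the equation $\phi(P) = (2m^2,\pm 3m^3)$ simplifies to a low-degree polynomial condition such as $x^4 = 8y^2$ (Map 3, $k = 16$), $x^2 y^2 = -2n^2$ (Map 7, $k = 2n^3$), or $x^2/y^2 = 2$ (Map 8, $k = n^2$), each of which has no rational solution on $C_k$, so this case contributes no additional values of $k$.

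The main obstacle is simply the bookkeeping of ten maps, four torsion types, and several candidate sub-families of $k$; however, the algebra in each instance is elementary, and apart from the Fueter classification no further machinery is required.
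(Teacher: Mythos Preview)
Your proposal is correct and follows essentially the same case analysis as the paper: classify the rational torsion of $E_a$ via Fueter's theorem, then for each of the ten maps check whether a rational point of $C_k$ can land on each torsion type. The organization differs only cosmetically---you partition by the shape of the torsion point (including a separate treatment of the identity $O$), while the paper partitions into ``image has $x$- or $y$-coordinate zero'' versus ``$a$ is a sixth power or $-432$ times one''---and in two sub-cases your argument is actually tighter: you dispose of $k=16$ (from $4k=\alpha^6$) by showing $x^4=8y^2$ has no rational solution rather than appealing to $C_{16}(\Q_2)=\emptyset$, and for $16k^2=\alpha^6$ you correctly note this gives the family $k=2n^3$ and eliminate it via $x^2y^2=-2n^2<0$, whereas the paper's assertion that this forces $k=2$ is, as stated, too strong.
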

\begin{proof}
  Apart from the cases of $E_{a^{6}}$ and $E_{-432a^{6}}$, every torsion point on $E_{a}$ has the $x$ or $y$ coordinate zero. Inspecting the ten maps above, we find that if $P \in C_{k}(\Q)$ and its image on $E_{a}$ has the $x$ or $y$ coordinate zero, then $x = 0$ or $y = 0$ or (for the seventh or tenth maps) that $x^{6} = y^{6} = k/2$. This implies that $k/2$ is a sixth power, but since $k$ is sixth-power free, $k = 2$.

Now, we consider the cases that $E_{a} = E_{\alpha^{6}}$ or $E_{a} = E_{-432 \alpha^{6}}$ for $a \in \{k$, $4k$, $-k^{2}$, $16k^{2}$, $k^{3}$, $-4k^{4} \}$. 
If $\alpha$ is a rational number and $k = \alpha^{6}$ is a sixth
power, this forces $k = 1$. If $4k = \alpha^{6}$ and $k$ is
sixth-power free, then $k = 16$ but $x^{6} + y^{6} = 16$ has no points
in $\Q_{2}$. The cases $-k^{2} = -432\alpha^{6}$ and $-4k^{4} = -432\alpha^{6}$
never occur. If $16k^{2} = \alpha^{6}$, then $k = 2$. Finally, if $k^{3} = \alpha^{6}$, then $k$ is a perfect square. In this case,
$E_{k^{3}}$ has the torsion points $(2 \alpha^{2}, \pm 3 \alpha^{3})$. However,
we have that $2 \alpha^{2} = \frac{\alpha^{2} x^{2}}{y^{2}}$ or $\frac{\alpha^{2} y^{2}}{x^{2}}$, which implies that
$2 = \frac{x^{2}}{y^{2}}$ or $\frac{y^{2}}{x^{2}}$, contradicting the irrationality of $\sqrt{2}$. 
\end{proof}

As a consequence of the above result, if $k \not\in \{ 1, 2 \}$ is sixth-power free and the rank of one of the six elliptic curves $E_{k}$, $E_{4k}$, $E_{-k^{2}}$, $E_{16k^{2}}$, $E_{k^{3}}$ or $E_{-4k^{4}}$ is zero, then $k$ is not a sum of
two rational sixth powers. For each of the $111625$ values of $k$ found in the previous section, we need to determine the Mordell-Weil group (or a finite index subgroup thereof) of one of these six curves. The most straightforward
approach to this problem is to conduct a $2$-descent. However, a $2$-descent
on $E_{k}$ requires computing the class group of $\Q(\sqrt[3]{-k})$,
and this is time-consuming to do unconditionally if $k$ is large. We proceed
to apply a number of other techniques specific to our situation and resort
to an unconditional $2$-descent only when absolutely necessary.

\section{Checking if $L(E_{k^{3}},1) = 0$}

The elliptic curve $E_{k^{3}} : y^{2} = x^{3} + k^{3}$ is a quadratic twist
of $E_{1} : y^{2} = x^{3} + 1$. If $k \equiv 1 \pmod{8}$, the sign of the
functional equation for $E_{k^{3}}$ is $1$, while if $k \equiv 2 \pmod{8}$,
the sign of the functional equation is $-1$. We are able to rule out
most odd values of $k$ by showing that $L(E_{k^{3}},1) \ne 0$.

Waldspurger's theorem \cite{Waldspurger} says, very roughly speaking, that
\[
  \sum_{k} k^{1/4} \sqrt{L(E_{k^{3}},1)} q^{k}
\]
is a weight $3/2$ modular form of a particular level. In Theorem~11 of \cite{Purkait}, Soma Purkait works out the predictions of Waldspurger's theorem, showing
that there is a modular form $f = \sum b(k) q^{k}$ of level $576$ and trivial character whose Fourier coefficients encode the $L$-values of $L(E_{k^{3}},1)$ under the assumption that
$3 \nmid k$. In Example 2 of \cite{Purkait}, Purkait gives a complicated
formula for this modular form $f$ in terms of ternary theta series. We are able
to find a formula more amenable to computation using the theta series for
the six ternary quadratic forms
\begin{align*}
  Q_{1} &= x^{2} + 4y^{2} + 144z^{2},\\
  Q_{2} &= 4x^{2} - 4xy + 5y^{2} + 36z^{2},\\
  Q_{3} &= 4x^{2} + 9y^{2} + 16z^{2},\\
  Q_{4} &= x^{2} + 16y^{2} + 36z^{2},\\
  Q_{5} &= 4x^{2} + 13y^{2} + 10yz + 13z^{2}, \text{ and }\\
  Q_{6} &= 4x^{2} + 4y^{2} + 4yz + 37z^{2}.
\end{align*}
These six quadratic forms constitute a single genus. Let
\[
  h = \frac{5}{16} \theta_{Q_{1}} - \frac{3}{16} \theta_{Q_{2}} - \frac{7}{16} \theta_{Q_{3}} + \frac{5}{16} \theta_{Q_{4}} + \frac{9}{16} \theta_{Q_{5}} - \frac{3}{16} \theta_{Q_{6}} = \sum c(n) q^{n}.
\]
Then for $k \equiv 1 \pmod{24}$ we have $c(k) = b(k)$, and
if $k \equiv 17 \pmod{24}$, we have $c(k) = 6b(k)$. It follows that if $k \equiv 1 \pmod{8}$ is a fundamental discriminant and $c(k) \ne 0$, then
$L(E_{k^{3}},1) \ne 0$. Hence $E_{k^{3}}$ has rank zero, and if $k > 1$ this implies that $k$ is not the sum of two rational sixth powers. (If $k$ is not squarefree, we can simply replace $k$ with $k/m^{2}$ in the above calculation.)

Each theta series above can be computed by multiplying a binary theta series by a unary theta series. In this way, it is possible to compute the first $165$ million coefficients of $h$ and among these determine the odd values of $k$
for which $L(E_{k^{3}},1) \ne 0$. Of the $111625$ values of $k$ for which $C_{k}$ is locally solvable, $55284$ are odd while $56341$ are even. The
computation just described rules out all but $2753$ odd values of $k$. The computation takes 559.20 seconds, and the code run can be found in the script \href{https://github.com/newtan18/Sums-of-Two-Sixth-Powers/blob/main/step2-wald.txt}{\tt step2-wald.txt}.

\section{Computing Mordell-Weil groups}

Here and elsewhere, we rely on the procedure for explicit $n$-descent
developed by Cremona, Fisher, O'Neil, Simon and Stoll in
\cite{Descent1}, \cite{Descent2}, \cite{Descent3} and implemented in
Magma with much of the code written by Michael Stoll, Tom Fisher, and
Steve Donnolly.

First, we use that each elliptic curve $E_{a}$ has a cyclic $3$-isogeny. We take the remaining $59094$ values of $k$ and compute the $3$-isogeny Selmer groups to bound the rank for the elliptic curves in the set $\{ E_{k}, E_{4k}, E_{-k^{2}}, E_{16k^{2}}, E_{-4k^{4}} \}$. We hope to rule out $k$'s for which one of these curves has rank zero and so we only test elliptic curves with
root number equal to $1$. This test is run in \href{https://github.com/newtan18/Sums-of-Two-Sixth-Powers/blob/main/step3-3isog.txt}{\tt step3-3isog.txt} and takes
a bit under 6 hours (namely 20551.4 seconds). This step rules out $39586$ values of $k$, and $19508$ values remain.

Second, for each of the $19508$ remaining $k$'s, we perform a full $3$-descent
by doing a first and second $3$-isogeny descent (via the Magma command {\tt ThreeDescentByIsogeny}) on $E_{k}$, $E_{4k}$ and $E_{k^{3}}$. For these curves, this command requires class group computations of low discriminant quadratic and cubic fields. We search for points on the resulting $3$-covers in the hope that we can provably compute the rank of $E_{k}$, $E_{4k}$ or $E_{k^{3}}$. This test is run in \href{https://github.com/newtan18/Sums-of-Two-Sixth-Powers/blob/main/step4-findMW.txt}{\tt step4-findMW.txt}. Once the $3$-covers are found, we sort the curves in increasing order of the upper bound on the rank and search for points on the associated $3$-covers with a height bound of $10000$. If we are not successful, we double the height bound and search again. If we are not successful at a height bound of $320000$, we give up. This is the most time consuming step of the process, taking about 26 hours. This step finds $864$ additional values of $k$ for which one of $E_{k}$, $E_{4k}$ or $E_{k^{3}}$ has rank zero. In the end we succeed
in computing the Mordell-Weil groups of one of $E_{k}$, $E_{4k}$, or $E_{k^{3}}$
for all but $196$ values of $k$. There are also 34 cases where the elliptic curve in question has rank 5, and one case ($k = 123975217$) for which the rank
of $E_{k}$ is 6. For these $k$'s we seek to find the Mordell-Weil group of a different elliptic curve.

Third, for each of the $196+35 = 231$ remaining $k$'s, we obtain as much unconditional information as possible about the ranks of the six elliptic curves using descent by $3$-isogeny, as well as a $2$-descent on $E_{k^{3}}$ (combined with the Cassels-Tate pairing to identify $2$-covers as corresponding to an element of the Shafarevich-Tate group of $E_{k^{3}}$). Once these unconditional upper bounds on ranks have been obtained, we search for points on these curves by assuming GRH and performing $2$-descents and $4$-descents on all six curves and searching for points on the $2$-covers and $4$-covers to see if enough independent points are found to match the unconditional rank upper bound. This takes about 22 minutes (1292.52 seconds). The code that runs these computations is available
in the scripts \href{https://github.com/newtan18/Sums-of-Two-Sixth-Powers/blob/main/step5-24descent.txt}{\tt step5-24descent.txt} and \href{https://github.com/newtan18/Sums-of-Two-Sixth-Powers/blob/main/step5-highrank.txt}{\tt step5-highrank.txt}. Of the
$196$ $k$'s for which generators were not found, this step is unsuccessful
for $26$, and of the $35$ $k$'s for which one of $E_{k}$, $E_{4k}$ or $E_{k^{3}}$ has rank $5$ or $6$, this is unsuccessful for $4$ values of $k$.

Fourth, for each of the 30 remaining values of $k$ we use the method
of Tom Fisher \cite{Fisher12} to search for points using $12$-descent.
For each of the remaining $k$'s there is at least one elliptic curve $E_{a}$
for which we have an unconditional upper bound on the rank of $1$, and
for which the root number is $-1$. For each such $k$, we choose $a$ minimal
subject to these conditions and perform a conditional $12$-descent
and search for points. We succeed in finding a generator in $23$ cases.
We fail to find a generator for the following seven values of $k$:
$49897450$, $117092530$, $120813050$, $128327978$, $130187450$, $149477050$,
and $160631290$. (For $k = 128327978$, $E_{k^{3}}$ has rank $5$ with easily
found generators.) This is performed with the script \href{https://github.com/newtan18/Sums-of-Two-Sixth-Powers/blob/main/step6-12desc.txt}{\tt step6-12desc.txt}
and the running time is just over 3 hours (11180.99 seconds).

So far, we have avoided doing an unconditional $2$-descent on any
elliptic curve other than $E_{k^{3}}$ because of the cost of computing
the class group of a (potentially high discriminant) cubic field. We
now do this for the remaining seven values of $k$. For each $k$, we
choose the elliptic curve for which the corresponding Minkowski bound
is the smallest. For $k = 49897450$ and $k = 149477050$, this shows
that $E_{k}$ has rank zero. For $k = 120813050$ and $k = 130187450$
this shows that $E_{4k}$ has rank zero. For $k = 128327978$, the
computation shows that $E_{16k^{2}}$ has rank zero (using both a
$2$-descent and the Cassels-Tate pairing). For $k = 117092530$ and $k
= 160631290$, a $2$-descent shows that $E_{k}$ has rank $1$ (while
previously our unconditional bound on the rank had been $3$). This is
performed with the script \href{https://github.com/newtan18/Sums-of-Two-Sixth-Powers/blob/main/step7-2descents.txt}{\tt step7-2descents.txt}, and the running
time is just under 2 hours (7153.69 seconds). For $k = 117092530$, the
Minkowski bound for $\Q(\sqrt[3]{-k})$ is $57383551$, and the time needed
for the proof phase of the class group computation is $3327.08$ seconds.

Of the $19508$ values of $k$ that remained after step $3$, $864$ were
removed in step $4$ and five more were removed in step $7$. For each of the
remaining $18639$ values of $k$, we know a finite-index subgroup of
the Mordell-Weil group of one of the six corresponding elliptic
curves, and moreover that curve has rank less than or equal to $4$. In fact,
of the $18639$ values of $k$, the chosen elliptic curve has rank $1$
in $16032$ cases, rank $2$ in $1172$ cases, rank $3$ in $1371$ cases,
and rank $4$ in only $64$ cases.

\section{Using the Mordell-Weil sieve}

As indicated in Section~\ref{back}, the goal of the Mordell-Weil sieve
is to choose an integer $N$ and a finite set $S$ of primes $p$ of good
reduction for $E$ and consider the diagram
\[
\xymatrix{
C_{k}(\Q) \ar[rr]^{\iota} \ar[d] & & E(\Q)/NE(\Q) \ar[d]^{\alpha}\\
\prod_{p \in S} C_{k}(\F_{p}) \ar[rr]^{\beta} & & \prod_{p \in S} E(\F_{p})/NE(\F_{p})}.
\]
If one finds that $\im \alpha \cap \im \beta = \emptyset$, then $C_{k}(\Q)$ must
be empty. Here $E$ can be any of the six elliptic curves $E_{k}$, $E_{4k}$, $E_{-k^{2}}$, $E_{16k^{2}}$, $E_{k^{3}}$, or $E_{-4k^{4}}$. In practice, we are not always
able to provably find $E(\Q)$. Instead, we have used the {\tt Saturation} command in Magma to compute a finite index subgroup
$A \subseteq E(\Q)$ with the property that $[E(\Q) : A]$ is not divisible
by any primes $p \leq 100$. It follows that if there is no prime $\ell > 100$
for which $\ell \mid N$, then $A/NA \cong E(\Q)/NE(\Q)$, and we may use
$A$ in place of $E(\Q)$ in the diagram above. In practice, the largest $N$
we need to use is $N = 84$.

Before discussing the method and results, we begin with a simple
example. Let $k = 138826$. We have $E_{4k}(\Q) \cong \Z$, and a generator is
\[
  P = \left(\frac{605879737}{2358^{2}}, \frac{-17828809046227}{2358^{3}}\right).
\]
We use the map $\phi : C_{k} \to E_{4k}$ given by $\phi(x,y) =
\left(\frac{x^{4}}{y^{2}}, \frac{x^{6} + 2y^{6}}{y^{3}}\right)$.  We
find that $C_{k}(\F_{5})$ contains $6$ points, $E_{4k}(\F_{5}) \cong \Z/6\Z$, but that the image of $C_{k}(\F_{5}) \to E_{4k}(\F_{5})$ consists of $3$ points.  The
reduction $\tilde{P} \in E_{4k}(\F_{5})$ has order $6$, and if $n$ is
an integer, then $nP$ reduces to a point in $E_{4k}(\F_{5})$ that is
in the image of $C_{k}(\F_{5}) \to E_{4k}(\F_{5})$ if and only if $n$
is even. It follows that if $Q \in C_{k}(\Q)$, then $\phi(Q) = nP$ for
some even $n$.

Now we consider reduction modulo $7$. In this case $C_{k}(\F_{7})$ has $36$
points and the image of $C_{k}(\F_{7}) \to E_{4k}(\F_{7})$ consists of $6$ points.
We have $E_{4k}(\F_{7}) \cong \Z/2\Z \times \Z/6\Z$ and the reduction
$\tilde{P} \in E_{4k}(\F_{7})$ again has order $6$. This, time we
find that $nP$ reduces to a point in $E_{4k}(\F_{7})$ that is in
the image of $C_{k}(\F_{7}) \to E_{4k}(\F_{7})$ if and only if $n \equiv 1 \text{ or } 5 \pmod{6}$. It follows that if $Q \in C_{k}(\Q)$ then $\phi(Q) = nP$
for some odd $n$, and this contradicts the previous paragraph. Thus $C_{k}(\Q) = \emptyset$. 

As explained in Section 3.2 of \cite{BruinStoll}, the sets $A/NA$ can
be very large if $N$ is large or if the rank of $E$ is high. For this
reason we follow their suggestion of successively raising $N$ one
prime factor at a time. Suppose that we have already computed the
admissible elements of $A/NA$ (i.e. those that could possibly occur as
the image of a point from $C_{k}(\Q)$) by sieving using a collection
of small primes $S$. We then choose a small prime $r$ and set $N' =
rN$. Then we find the full preimage of the admissible elements in
$A/N'A$, retest their admissibility for primes in $S$, and possibly
test a further set of primes.  Unlike the case of \cite{BruinStoll},
the maximum $N$ needed to prove that $C_{k}(\Q)$ is empty is never
more than $84$ (while Bruin and Stoll report occasionally needing to
have $N$ as large as $10^{100}$).

As an example, consider the case of $k = 3506050$. The elliptic curve
$E_{k}$ has rank $4$ and trivial torsion subgroup. First, we let $N = 2$ and test the primes $p$ of good reduction less than or equal to
$311$.  We find that of the 16 elements of $A/2A$,
$9$ are admissible. We then increase $N$ to $4$ and begin with $9
\cdot 16 = 144$ elements of $A/4A$. We retest
their admissibility for primes less than or equal to $311$ and find
that all of them are admissible. We then increase $N$ from $4$ to $12$
and test primes $p \leq 479$. Initially, we had $11664$ elements of
$A/12A$, but this is reduced to $1296$. Next, we
increase $N$ from $12$ to $84$ and start with $3111696$ elements of
$A/84A$. Testing for $p \leq 229$ reduces this to $1204$ elements,
and by the time we test $p = 1021$, no admissible elements remain. Hence
$C_{3506050}(\Q) = \emptyset$. The total time required for this $k$ was
$508$ seconds, and this is the most time consuming of all the $k$'s we test.

Compared to the previous steps, the Mordell-Weil
sieve step is comparatively fast, taking about 35 minutes (2107.69 seconds)
to show that $C_{k}(\Q) = \emptyset$ for all $18639$ remaining $k$'s with
$k < 164634913$. This computation is performed by the script \href{https://github.com/newtan18/Sums-of-Two-Sixth-Powers/blob/main/step8-MWsieve.txt}{\tt step8-MWsieve.txt}. This concludes the proof of Theorem~\ref{main}. Below is
a table summarizing the steps in the computation and the time required for each.

\begin{center}
\begin{tabular}{c|c|c|c}
Step & Task & Run time (seconds) & $k$'s eliminated\\
\hline  
1 & Local solvability & 46.29 & $164523287$\\
2 & $L(E_{k^{3}},1) \ne 0$ & 559.20 & $52531$\\
3 & 3-isogeny descent & 20551.4 & $39586$\\
4 & Full 3-descent & 119076 & $864$\\
5 & Conditional descent & 1292.52 & $0$\\
6 & 12-descent & 11180.99 & $0$\\
7 & Unconditional 2-descent & 7153.69 & $5$\\
8 & Mordell-Weil sieve & 2107.69 & $18639$\\
\hline
Total & & $161968$ & $164634912$\\
\end{tabular}
\end{center}

\section{Concluding remarks}

As mentioned in the introduction, it is natural to consider the
problem of finding the smallest positive integer which is a sum of two
rational $n$th powers but not a sum of two integer $n$th powers. If $n
= 5$, the curve $D_{k} : x^{5} + y^{5} = k$ admits no map to an
elliptic curve, and the projective closure of $D_{k}$ always has a
rational point (namely $(-1 : 1 : 0)$). This precludes the possibility
of ruling out rational points on $D_{k}$ using local methods or
the Mordell-Weil sieve. For these reasons, the $n = 5$ case appears to be more
challenging than the $n = 4$ or $n = 6$ cases.

Similar techniques should allow one to approach the cases of $n = 8$ and $n = 12$ where there are maps from $x^{n} + y^{n} = k$ to elliptic curves,
but the smallest values of $k$ for which these curves are known to have rational non-integer points are $\left(\frac{50429}{17}\right)^{8} + \left(\frac{43975}{17}\right)^{8} \approx 8 \cdot 10^{27}$ and
$\left(\frac{9298423}{17}\right)^{12} + \left(\frac{8189146}{17}\right)^{12} \approx 7.6 \cdot 10^{46}$, respectively. The size of these numbers would make an exhaustive search prohibitively time-consuming.

\bibliographystyle{plain}
\bibliography{refs}

\begin{thebibliography}{10}

\bibitem{Aoki}
Noboru Aoki.
\newblock Simple factors of the {J}acobian of a {F}ermat curve and the {P}icard
  number of a product of {F}ermat curves.
\newblock {\em Amer. J. Math.}, 113(5):779--833, 1991.

\bibitem{Magma}
Wieb Bosma, John Cannon, and Catherine Playoust.
\newblock The {M}agma algebra system. {I}. {T}he user language.
\newblock volume~24, pages 235--265. 1997.
\newblock Computational algebra and number theory (London, 1993).

\bibitem{BremnerMorton}
Andrew Bremner and Patrick Morton.
\newblock A new characterization of the integer {$5906$}.
\newblock {\em Manuscripta Math.}, 44(1-3):187--229, 1983.

\bibitem{BruinStoll}
Nils Bruin and Michael Stoll.
\newblock The {M}ordell-{W}eil sieve: proving non-existence of rational points
  on curves.
\newblock {\em LMS J. Comput. Math.}, 13:272--306, 2010.

\bibitem{Cohen}
Henri Cohen.
\newblock {\em Number theory. {V}ol. {I}. {T}ools and {D}iophantine equations},
  volume 239 of {\em Graduate Texts in Mathematics}.
\newblock Springer, New York, 2007.

\bibitem{Descent1}
J.~E. Cremona, T.~A. Fisher, C.~O'Neil, D.~Simon, and M.~Stoll.
\newblock Explicit {$n$}-descent on elliptic curves. {I}. {A}lgebra.
\newblock {\em J. Reine Angew. Math.}, 615:121--155, 2008.

\bibitem{Descent2}
J.~E. Cremona, T.~A. Fisher, C.~O'Neil, D.~Simon, and M.~Stoll.
\newblock Explicit {$n$}-descent on elliptic curves. {II}. {G}eometry.
\newblock {\em J. Reine Angew. Math.}, 632:63--84, 2009.

\bibitem{Descent3}
J.~E. Cremona, T.~A. Fisher, C.~O'Neil, D.~Simon, and M.~Stoll.
\newblock Explicit {$n$}-descent on elliptic curves {III}. {A}lgorithms.
\newblock {\em Math. Comp.}, 84(292):895--922, 2015.

\bibitem{Ekl}
Randy~L. Ekl.
\newblock Equal sums of four seventh powers.
\newblock {\em Math. Comp.}, 65(216):1755--1756, 1996.

\bibitem{Fisher12}
Tom Fisher.
\newblock Finding rational points on elliptic curves using 6-descent and
  12-descent.
\newblock {\em J. Algebra}, 320(2):853--884, 2008.

\bibitem{Fueter}
Rudolf Fueter.
\newblock Ueber kubische diophantische {G}leichungen.
\newblock {\em Comment. Math. Helv.}, 2(1):69--89, 1930.

\bibitem{Gandhi}
J.~M. Gandhi.
\newblock On {F}ermat's last theorem.
\newblock {\em Amer. Math. Monthly}, 71:998--1006, 1964.

\bibitem{GrossZagier}
Benedict~H. Gross and Don~B. Zagier.
\newblock Heegner points and derivatives of {$L$}-series.
\newblock {\em Invent. Math.}, 84(2):225--320, 1986.

\bibitem{Kolyvagin}
V.~A. Kolyvagin.
\newblock Finiteness of {$E({\bf Q})$} and {$\Sha(E,{\bf Q})$} for a subclass
  of {W}eil curves.
\newblock {\em Izv. Akad. Nauk SSSR Ser. Mat.}, 52(3):522--540, 670--671, 1988.

\bibitem{LamzouriLiSound}
Youness Lamzouri, Xiannan Li, and Kannan Soundararajan.
\newblock Conditional bounds for the least quadratic non-residue and related
  problems.
\newblock {\em Math. Comp.}, 84(295):2391--2412, 2015.

\bibitem{Purkait}
Soma Purkait.
\newblock Explicit application of {W}aldspurger's theorem.
\newblock {\em LMS J. Comput. Math.}, 16:216--245, 2013.

\bibitem{Silverman}
Joseph~H. Silverman.
\newblock {\em The arithmetic of elliptic curves}, volume 106 of {\em Graduate
  Texts in Mathematics}.
\newblock Springer-Verlag, New York, 1992.
\newblock Corrected reprint of the 1986 original.

\bibitem{Waldspurger}
J.-L. Waldspurger.
\newblock Sur les coefficients de {F}ourier des formes modulaires de poids
  demi-entier.
\newblock {\em J. Math. Pures Appl. (9)}, 60(4):375--484, 1981.

\end{thebibliography}

\end{document}